 \theoremstyle{plain}
\newtheorem{thm}{Theorem}[section]
  \theoremstyle{plain}
  \newtheorem{lem}[thm]{Lemma}
  \theoremstyle{remark}
  \newtheorem*{acknowledgement*}{Acknowledgement}
\numberwithin{equation}{section}
\begin{document}

\title{law of large numbers for monotone convolution}

\author{jiun-chau wang and enzo wendler}

\begin{abstract}
Using martingale convergence theorem, we prove a law of large numbers
for monotone convolutions $\mu_{1}\triangleright\mu_{2}\triangleright\cdots\triangleright\mu_{n}$,
where $\mu_{j}$'s are probability laws on $\mathbb{R}$ with finite
variances but not required to be identical.
\end{abstract}

\subjclass[2000]{Primary: 46L53; Secondary: 60F05, 60J05}

\keywords{Monotone convolution; Law of large numbers; Markov chain}

\date{March 28, 2013}

\address{Department of Mathematics and Statistics, University of Saskatchewan,
106 Wiggins Road, Saskatoon, Saskatchewan S7N 5E6, Canada}

\email{jcwang@math.usask.ca; epw943@mail.usask.ca}

\maketitle

\section{Introduction and the main result}

The \emph{monotone convolution} $\triangleright$ is an associative
binary operation on $\mathcal{M}$, the set of all Borel probability
measures on the real line $\mathbb{R}$. It was introduced by Muraki
in \cite{Muraki1}, based on his notion of monotonic independence
for operators acting on a certain type of Fock space. Later, a universal
construction for the monotone convolution of measures was found in
\cite{Franz}, which does not depend on the underlying Hilbert space.
Thus, the monotone convolution \emph{}$\mu\triangleright\nu$ for
two measures $\mu,\nu\in\mathcal{M}$ is defined as the distribution
of $X+Y$, where the (non-commutative) random variables $X$ and $Y$
are monotonically independent and having distributions $\mu$ and
$\nu,$ respectively. Together with classical, free, and Boolean convolutions,
the monotone convolution is one of the four natural convolution operations
on the set $\mathcal{M}$ \cite{Speicher,Muraki2}.

The research of limit theorems for monotone convolution has been active
in recent years. Notably, an equivalence between monotone and Boolean
limit theorems has been proved in \cite{AnshelevichWilliams}, making it possible to apply the classical Gnedenko type convergence criterion to the weak convergence for sums of monotonically independent
and identically distributed random variables. In spite of these successful
results, the literature lacks a treatment of limit theorems for non-identically
distributed variables. The goal of the current paper is to supply
one such limit theorem in the context of law of large numbers for
variables with finite variances. The convergence condition we discovered
here coincides with the one for the classical law of large numbers.

To explain our result in detail, we first recall some definitions.
Following \cite{GK}, a sequence of measures $\{\nu_{n}\}_{n=1}^{\infty}$
in $\mathcal{M}$ is said to be \emph{stable} if one can find constants
$a_{n}\in\mathbb{R}$ such that \[
\lim_{n\rightarrow\infty}\nu_{n}(\{ t\in\mathbb{R}:\,\left|t-a_{n}\right|\geq\varepsilon\})=0,\qquad\varepsilon>0.\]
Thus, the weak law of Khintchine states that if $\{ X_{n}\}_{n=1}^{\infty}$
is an i.i.d. sequence of real random variables drawn from a law $\mu\in\mathcal{M}$
with finite expectation $m(\mu)$ then the sequence of distributions
\[
D_{1/n}(\underbrace{\mu*\mu*\cdots*\mu}_{n\:\text{times}})\]
is stable, with the asymptotic constants $a_{n}=m(\mu)$ for all $n\geq1$.
Here the notation $*$ means the classical convolution of measures.
For $b>0$, the measure $D_{b}\mu$ is the \emph{dilation} of $\mu$
defined by $D_{b}\mu(A)=\mu(b^{-1}A)$ for Borel measurable $A\subset\mathbb{R}$.

Our main result is the following

\begin{thm}
Let $\{\mu_{n}\}_{n=1}^{\infty}$ be a sequence of probability laws
with finite variances $\text{\emph{var}}(\mu_{n})$. If the series
\begin{equation}
\sum_{k=1}^{\infty}\frac{\text{\emph{var}}(\mu_{k})}{b_{k}^{2}}<\infty\label{eq:1.1}\end{equation}
for some sequence $\{ b_{n}\}_{n=1}^{\infty}$ with $0<b_{1}<b_{2}<\cdots\rightarrow\infty$,
then the sequence of measures \[
D_{1/b_{n}}(\mu_{1}\triangleright\mu_{2}\triangleright\cdots\triangleright\mu_{n})\]
is stable.
\end{thm}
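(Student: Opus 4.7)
\emph{Proposed strategy.} The plan is to imitate the classical proof of Kolmogorov's strong law (martingale convergence combined with Kronecker's lemma), substituting for the independent summands the classical Markov-chain realization of monotonic independence. By the universal construction of \cite{Franz} (cf.\ also \cite{Muraki1}), there exists on some classical probability space a Markov chain $\{S_n\}_{n\geq 0}$ with $S_0 = 0$ and transition kernels $\{K_n\}_{n\geq 1}$ such that the distribution of $S_n$ is $\mu_1 \triangleright \mu_2 \triangleright \cdots \triangleright \mu_n$ for every $n \geq 1$. The random variable $S_n/b_n$ then has law $D_{1/b_n}(\mu_1 \triangleright \cdots \triangleright \mu_n)$, so the required stability is equivalent to producing constants $a_n \in \mathbb{R}$ with $S_n/b_n - a_n \to 0$ in probability.

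\emph{Key identity.} Write $Y_n = S_n - S_{n-1}$ and $\mathcal{F}_n = \sigma(S_1, \ldots, S_n)$. The crux of the argument is to verify that $K_n$ has drift equal to $m(\mu_n)$ uniformly in the starting point:
\[
E[Y_n \mid \mathcal{F}_{n-1}] = m(\mu_n) \quad \text{a.s.}
\]
This should emerge from the Nevanlinna representation of the reciprocal Cauchy transform $F_{\mu_n}$ underlying $K_n$: since $F_{\mu_n}(z) = z - m(\mu_n) - \text{var}(\mu_n)/z + o(1/z)$ in any Stolz angle at infinity, the coefficient of $z^0$ is the same regardless of the state $x$ conditioning $K_n(x,\cdot)$. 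Once this identity is in hand, the variables $D_n := Y_n - m(\mu_n)$ are orthogonal $L^2$ martingale differences, and combining orthogonality with the additivity of variance under monotone convolution (which follows immediately from $F_{\mu \triangleright \nu} = F_\mu \circ F_\nu$) forces $E D_n^2 = \text{var}(\mu_n)$.

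\emph{Martingale finish.} Set $M_n = \sum_{k=1}^n D_k / b_k$. By the preceding step $\{M_n, \mathcal{F}_n\}$ is an $L^2$-martingale with
\[
\sup_n E M_n^2 \;=\; \sum_{k=1}^{\infty} \frac{\text{var}(\mu_k)}{b_k^2} \;<\; \infty
\]
by hypothesis (\ref{eq:1.1}). Doob's $L^2$ martingale convergence theorem gives a.s.\ convergence of $M_n$, and Kronecker's lemma (applicable because $0<b_n\uparrow\infty$) converts this into $b_n^{-1}\sum_{k=1}^n D_k \to 0$ almost surely, i.e., $S_n/b_n - a_n \to 0$ a.s.\ with the deterministic constants $a_n = b_n^{-1}\sum_{k=1}^n m(\mu_k)$. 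Almost sure convergence entails convergence in probability, which is precisely the stability of $D_{1/b_n}(\mu_1 \triangleright \cdots \triangleright \mu_n)$.

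\emph{Main obstacle.} The only nontrivial ingredient is the conditional-mean identity $E[Y_n \mid \mathcal{F}_{n-1}] = m(\mu_n)$. Since the increments $Y_n$ are not classically independent, this is a genuinely ``monotone'' fact and has to be extracted from the analytic structure of the Markov kernel $K_n$ via a uniform (in the starting point) asymptotic expansion at infinity. Everything downstream is standard martingale machinery.
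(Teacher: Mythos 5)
Your proposal is correct and follows essentially the same route as the paper: realize $\mu_1\triangleright\cdots\triangleright\mu_n$ as the law of a Markov chain with kernels $x\mapsto\delta_x\triangleright\mu_n$, use the translation identity $F_{\delta_x\triangleright\mu_n}=F_{\mu_n}-x$ (via the Nevanlinna representation) to get conditional mean $x+m(\mu_n)$ and conditional variance $\text{var}(\mu_n)$, and then finish with the $L^2$-martingale convergence theorem and Kronecker's lemma. The only part you defer to a citation that the paper actually carries out in detail is the construction of the chain itself (measurability of the kernels and the identity $\int\mu(dx)\,\delta_x\triangleright\nu(dt)=\mu\triangleright\nu(dt)$), but your outline of the key conditional-mean identity and the martingale finish matches the paper's argument.
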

As it will be seen from the proof of Theorem 1.1, a formula for the
asymptotic constants $a_{n}$ here is given by \[
a_{n}=\frac{1}{b_{n}}\sum_{k=1}^{n}m(\mu_{k}).\]
In particular, when $\mu_{1}=\mu_{2}=\cdots=\mu_{n}=\mu$ and $b_{n}=n$,
Theorem 1.1 shows that laws of the normalized sums\[
\frac{Y_{1}+Y_{2}+\cdots+Y_{n}}{n}\]
converge weakly to the point mass at $m(\mu)$ as $n\rightarrow\infty$,
where $\{ Y_{n}\}_{n=1}^{\infty}$ is a monotonically independent
sequence of random variables having the same distribution $\mu$.
This is the weak law of large numbers obtained in \cite{JC}. Finally,
we remark that the condition \eqref{eq:1.1} also implies that the
classical convolutions \[
D_{1/b_{n}}(\mu_{1}*\mu_{2}*\cdots*\mu_{n})\]
are stable (see \cite{GK}).

We end this section with some comments on the method of our proof.
The difficulty in proving limit theorems for monotone convolution
comes from the fact that the computation of $\triangleright$ requires
the composition of certain integral transforms. Precisely, recall
that the \emph{Cauchy transform} of a measure $\mu\in\mathcal{M}$
is defined as \[
G_{\mu}(z)=\int_{-\infty}^{\infty}\frac{1}{z-t}\,\mu(dt),\qquad\Im z>0,\]
and hence the map $F_{\mu}(z)=1/G_{\mu}(z)$ is an analytic self-map
of the complex upper half-plane $\mathbb{C}^{+}=\{ z=x+iy:\, y>0\}$.
For any $\mu,\nu\in\mathcal{M}$, it was shown in \cite{Franz} that
\[
F_{\mu\triangleright\nu}(z)=F_{\mu}\circ F_{\nu}(z),\qquad z\in\mathbb{C}^{+}.\]
Hence, proving Theorem 1.1 amounts to understanding the dynamics of
the backward compositions \[
F_{\mu_{1}}\circ F_{\mu_{2}}\circ\cdots\circ F_{\mu_{n}}\]
of analytic functions. In general, a dynamical system of this sort
is quite complicated to analyze using complex analysis. To go around
this difficulty, we utilize the Markov chain approach in \cite{LetacMalouche}
and the $L^{2}$-martingale convergence theorem to treat the composition
sequence of these $F$-transforms. We shall now begin to present these
details.

\section{Proof of the main result }

Let $\{\mu_{n}\}_{n=1}^{\infty}$ be the given sequence of probability
measures with finite variances, and let $\mathcal{B}$ denote the
Borel $\sigma$-field on $\mathbb{R}$. To each $n\geq2$, we introduce
the function\[
p_{n}(x,B)=\delta_{x}\triangleright\mu_{n}(B),\qquad x\in\mathbb{R},\quad B\in\mathcal{B}.\]
Then each $p_{n}(x,dy)$ is a transition probability function on $\mathbb{R}\times\mathcal{B}$.
Indeed, denote by $H$ the set of bounded and $\mathcal{B}$-measurable
$f:\mathbb{R}\rightarrow\mathbb{R}$ such that the function $Tf$
is also $\mathcal{B}$-measurable, where \[
Tf(x)=\int_{-\infty}^{\infty}f(y)\, p_{n}(x,dy),\qquad x\in\mathbb{R}.\]
First, since the map $x\mapsto\delta_{x}\triangleright\mu_{n}$ is
weakly continuous, the set $H$ contains all continuous and bounded
real-valued functions on $\mathbb{R}$. Secondly, by the monotone
convergence theorem, if $\{ f_{n}\}_{n=1}^{\infty}$ is a monotonically
increasing sequence of nonnegative functions in $H$ which converges
pointwisely to a bounded function $f$, then the limit function $f$
is also in $H$.

Now, consider the set $\mathcal{P}=\{(a,b):\,-\infty\leq a<b\leq\infty\}\cup\{\phi\}$
and the set $\mathcal{L}=\{ B\in\mathcal{B}:\, I_{B}\in H\}$. The
set $\mathcal{P}$ is clearly a $\pi$-system that generates the field
$\mathcal{B}$, and both $\phi,\mathbb{R}$ belong to the set $\mathcal{L}$.
Observe that for any finite interval $(a,b)$, there exists continuous
functions $0\leq f_{n}\leq1$ such that $f_{n}\nearrow I_{(a,b)}$
. This implies that the indicator $I_{(a,b)}$ is in $H$ because
$H$ is closed under bounded monotone convergence. Thus, the set $\mathcal{L}$
contains $\mathcal{P}$. Moreover, it is easy to see that $\mathcal{L}$
is a $\lambda$-system, and therefore we have $\mathcal{L}=\mathcal{B}$
by Dynkin's $\pi$-$\lambda$ theorem. It follows that to each fixed
$B\in\mathcal{B}$, the map $x\mapsto p_{n}(x,B)$ is $\mathcal{B}$-measurable,
justifying that $p_{n}$ is a transition probability.

Next, we consider the real-valued Markov chain $\{ X_{n}\}_{n=1}^{\infty}$
generated by the transition probabilities $\{ p_{n}\}_{n=2}^{\infty}$
and the initial distribution $\mu_{1}$. The existence such a Markov
chain is guaranteed by the Kolmogorov Extension Theorem, and the finite-dimensional
distributions of $\{ X_{n}\}_{n=1}^{\infty}$ are determined by\[
\Pr(X_{j}\in B_{j};\,1\leq j\leq n)=\int_{x_{1}\in B_{1}}\mu_{1}(dx_{1})\int_{x_{2}\in B_{2}}p_{2}(x_{1},dx_{2})\cdots\int_{x_{n}\in B_{n}}p_{n}(x_{n-1},dx_{n}).\]

Notice that one has \begin{eqnarray*}
G_{\mu\triangleright\nu}(z)=G_{\mu}(F_{\nu}(z)) & = & \int_{x\in\mathbb{R}}\frac{1}{F_{\nu}(z)-x}\,\mu(dx)\\
 & = & \int_{x\in\mathbb{R}}G_{\delta_{x}\triangleright\nu}(z)\,\mu(dx)\\
 & = & \int_{x\in\mathbb{R}}\int_{t\in\mathbb{R}}\frac{1}{z-t}\,\delta_{x}\triangleright\nu(dt)\,\mu(dx)\\
 & = & \int_{t\in\mathbb{R}}\frac{1}{z-t}\,\int_{x\in\mathbb{R}}\,\mu(dx)\,\delta_{x}\triangleright\nu(dt)\end{eqnarray*} for any $\mu,\nu\in\mathcal{M}$. (The $\mathcal{B}$-measurability of the function $\delta_{x}\triangleright\nu$
in $x$ follows from the first two paragraphs of this section.) Since
the Cauchy transform $G_{\mu\triangleright\nu}$ determines the measure
$\mu\triangleright\nu$ uniquely, we deduce that \[
\int_{-\infty}^{\infty}\,\mu(dx)\,\delta_{x}\triangleright\nu(dt)=\mu\triangleright\nu(dt).\]
In particular, if $B_{1}=B_{2}=\cdots=B_{n-1}=\mathbb{R}$, an easy
induction argument shows that \begin{eqnarray*}
\Pr(X_{n}\in B_{n}) & = & \int_{-\infty}^{\infty}\,\mu_{1}\triangleright\mu_{2}\triangleright\cdots\triangleright\mu_{n-1}(dx_{n-1})\, p_{n}(x_{n-1},B_{n})\\
 & = & \int_{-\infty}^{\infty}\,\mu_{1}\triangleright\mu_{2}\triangleright\cdots\triangleright\mu_{n-1}(dx_{n-1})\,\delta_{x_{n-1}}\triangleright\mu_{n}(B_{n})\\
 & = & \mu_{1}\triangleright\mu_{2}\triangleright\cdots\triangleright\mu_{n}(B_{n}),\end{eqnarray*}
and therefore the distribution of $X_{n}$ is precisely the monotone
convolution \[
\mu_{1}\triangleright\mu_{2}\triangleright\cdots\triangleright\mu_{n}.\]

We now compute the first two conditional moments of the Markov chain
$\{ X_{n}\}_{n=1}^{\infty}$. The notation $m_{2}(\mu_{n})$ denotes
the second moment of the measure $\mu_{n}$.

\begin{lem}
For $n\geq2$, we have \[
E[X_{n}|\, X_{n-1}]\overset{\text{a.s.}}{=}X_{n-1}+m(\mu_{n})\]
and \[
E[X_{n}^{2}|\, X_{n-1}]\overset{\text{a.s.}}{=}X_{n-1}^{2}+2m(\mu_{n})X_{n-1}+m_{2}(\mu_{n}).\]

\end{lem}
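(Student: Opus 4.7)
The plan is to reduce the lemma to a computation of the first and second moments of the transition kernel $p_n(x,\cdot)=\delta_x\triangleright\mu_n$, and then to invoke the Markov property. Specifically, once I establish
\[
\int y\,p_n(x,dy) = x + m(\mu_n), \qquad \int y^2\,p_n(x,dy) = x^2 + 2m(\mu_n)\,x + m_2(\mu_n),
\]
the identities of the lemma follow from the standard formula $E[f(X_n)\mid X_{n-1}]\overset{\text{a.s.}}{=}\int f(y)\,p_n(X_{n-1},dy)$ applied with $f(y)=y$ and $f(y)=y^2$.

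For the moment computation, I would use the Nevanlinna-type asymptotic expansion
\[
F_\rho(z) = z - m(\rho) - \frac{\mathrm{var}(\rho)}{z} + o(1/z) \qquad (z\to\infty\text{ non-tangentially}),
\]
which is classical and characterizes measures $\rho\in\mathcal{M}$ with finite variance. Since $F_{\delta_x}(w)=w-x$, the composition identity $F_{\mu\triangleright\nu}=F_\mu\circ F_\nu$ gives $F_{\delta_x\triangleright\mu_n}(z)=F_{\mu_n}(z)-x$, whence
\[
F_{\delta_x\triangleright\mu_n}(z) = z - (x+m(\mu_n)) - \frac{\mathrm{var}(\mu_n)}{z} + o(1/z).
\]
Reading off this expansion identifies the mean of $\delta_x\triangleright\mu_n$ as $x+m(\mu_n)$ and its variance as $\mathrm{var}(\mu_n)$, so its second moment equals $(x+m(\mu_n))^2+\mathrm{var}(\mu_n)=x^2+2m(\mu_n)\,x+m_2(\mu_n)$. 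Notably, $\delta_x\triangleright\mu_n$ is \emph{not} in general the translate of $\mu_n$ by $x$, but its first two moments happen to agree with those of the translate.

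To apply the Markov property with the unbounded function $f(y)=y^2$, I need $X_{n-1}$ to have finite second moment. Iterating the expansion above through $F_{\mu\triangleright\nu}=F_\mu\circ F_\nu$ yields $m(\mu_1\triangleright\cdots\triangleright\mu_k)=\sum_{j=1}^k m(\mu_j)$ and $\mathrm{var}(\mu_1\triangleright\cdots\triangleright\mu_k)=\sum_{j=1}^k \mathrm{var}(\mu_j)$, and since the law of $X_{n-1}$ is exactly this convolution, the required integrability is secured.

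The only nontrivial step is the passage from an asymptotic expansion of $F_\rho$ to an honest assertion about the moments of $\rho$; this is standard Nevanlinna theory (essentially a Stieltjes-inversion argument), and I would simply cite it rather than redo it. The remainder is bookkeeping plus a direct application of the Markov property.
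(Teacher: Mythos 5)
Your proposal is correct and follows essentially the same route as the paper: both reduce the lemma to computing the first two moments of the kernel $p_n(x,\cdot)$ via the identity $F_{\delta_x\triangleright\mu_n}=F_{\mu_n}-x$ and then invoke the regular conditional distribution, the only difference being that you read off the mean and variance from the asymptotic expansion of $F$ at infinity while the paper uses the equivalent Nevanlinna integral representation and its uniqueness.
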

\begin{proof}
We write the function $F_{\mu_{n}}$ in Nevanlinna form:\[
F_{\mu_{n}}(z)=z-m(\mu_{n})+\int_{-\infty}^{\infty}\frac{1}{t-z}\,\sigma_{n}(dt),\]
where $\sigma_{n}$ is a finite Borel measure on $\mathbb{R}$ with
$\sigma_{n}(\mathbb{R})=\text{var}(\mu_{n})$. Because \[
F_{\delta_{x}\triangleright\mu_{n}}(z)=F_{\mu_{n}}(z)-x,\]
the uniqueness of the Nevanlinna representation implies that \[
m(\delta_{x}\triangleright\mu_{n})=m(\mu_{n})+x\]
and $\text{var}(\delta_{x}\triangleright\mu_{n})=\text{var}(\mu_{n})$.
In other words, we have \[
\int_{-\infty}^{\infty}y\, p_{n}(x,dy)=m(\mu_{n})+x\]
and \[
\int_{-\infty}^{\infty}y^{2}\, p_{n}(x,dy)=x^{2}+2m(\mu_{n})x+m_{2}(\mu_{n}).\]
Hence, the desired result follows from the fact that the function
$p_{n}(X_{n-1},dy)$ serves as a regular conditional distribution
for $X_{n}$ given the $\sigma$-subfield $\sigma(X_{n-1})$.
\end{proof}
We are now ready to prove the main result.

\begin{proof}[Proof of Theorem 1.1]
For $n\geq1$, define \[ Y_{n}=X_{n}-\sum_{k=1}^{n}m(\mu_{k}).\] Then
Lemma 2.1 and the Markov property of $\{ X_{n}\}_{n=1}^{\infty}$
imply that $\{ Y_{n}\}_{n=1}^{\infty}$ is a $L^{2}$-martingale.
Consider the martingale differences \[ Z_{n}=Y_{n}-Y_{n-1},\qquad
n\geq2,\] and set $Z_{1}=Y_{1}$. By Lemma 2.1 again, we have
$E[Z_{n}|\, X_{n-1}]=0$ and the second moment \[
E[Z_{n}^{2}]=\text{var}(\mu_{n}).\]

Our proof now follows a classical line. Recall that $\{
b_{n}\}_{n=1}^{\infty}$ is a positive sequence increasing to
$\infty$ for which the condition \eqref{eq:1.1} holds. For $n\geq1$,
let \[ S_{n}=\sum_{k=1}^{n}\frac{Z_{k}}{b_{k}}.\]
 Note that $\{ S_{n}\}_{n=1}^{\infty}$ also forms another $L^{2}$-martingale.
Moreover, we have the second moment\begin{eqnarray*}
E[S_{n}^{\,2}] & = & E[E[(b_{n}^{-1}Z_{n}+S_{n-1})^{2}|\, X_{1},X_{2},\cdots,X_{n-1}]]\\
 & = & b_{n}^{-2}E[E[Z_{n}^{2}|\, X_{n-1}]]+E[S_{n-1}^{\,2}]+2b_{n}^{-1}S_{n-1}E[Z_{n}|\, X_{n-1}]\\
 & = & b_{n}^{-2}\text{var}(\mu_{n})+E[S_{n-1}^{\,2}].\end{eqnarray*}
Proceeding inductively, we get \[
E[S_{n}^{\,2}]=\sum_{k=1}^{n}\frac{\text{var}(\mu_{k})}{b_{k}^{2}},\]
which is bounded uniformly in $n$ by the condition \eqref{eq:1.1}.
Therefore, the $L^{2}$-martingale convergence theorem shows that
$\{ S_{n}\}_{n=1}^{\infty}$ converges almost surely, and Kronecker's
Lemma further implies that \[
\frac{1}{b_{n}}\sum_{k=1}^{n}Z_{k}=\frac{1}{b_{n}}\left[X_{n}-\sum_{k=1}^{n}m(\mu_{k})\right]\rightarrow0\]
on the set of points in the sample space where $\{ S_{n}\}_{n=1}^{\infty}$
converges. Thus, denoting \[
a_{n}=\frac{1}{b_{n}}\sum_{k=1}^{n}m(\mu_{k}),\]
the sequence $b_{n}^{-1}X_{n}-a_{n}$ converges in probability to
$0$ as $n\rightarrow\infty$. Then the proof is completed, because
$b_{n}^{-1}X_{n}$ has distribution $D_{1/b_{n}}(\mu_{1}\triangleright\mu_{2}\triangleright\cdots\triangleright\mu_{n})$.
\end{proof}
\begin{acknowledgement*}
The first author is supported by the NSERC Canada Discovery Grants
and the second author is supported by a University of Saskatchewan
New Faculty Graduate Support Program.
\end{acknowledgement*}

\end{document}